\theoremstyle{definition}
\newtheorem{thm}{Theorem}[section]
\newtheorem{prop}[thm]{Proposition}
\newtheorem{lemma}[thm]{Lemma}
\newtheorem{cor}[thm]{Corollary}
\newcommand{\goth}{\mathfrak}
\newcommand{\perm}[1]{\mbox{Perm}(#1)}
\def \OL {{\goth O}_{L}}
\def \OK {{\goth O}_{K}}
\def \p  {{\goth p}}
\def \OLp {{\goth O}_{L,{\p}}}
\def \OKp {{\goth O}_{K,{\p}}}
\def \A {{\goth A}}
\def \B {{\goth B}}
\def \Q {\mathbb{Q}}
\def \Z {\mathbb{Z}}
\def \e0 {e_{0}}
\def \B {\mathfrak B}
\title{Canonical Nonclassical Hopf-Galois Module Structure of Nonabelian Galois Extensions \\ Preliminary Report}
\author{Paul J. Truman}
\begin{document}

\maketitle

\begin{abstract}
Let $ L/K $ be a finite Galois extension of local or global fields in characteristic $ 0 $ or $ p $ with nonabelian Galois group $ G $, and let $ \B $ be a $ G $-stable fractional ideal of $ L $. We show that  $ \B $ is free over its associated order in $ K[G] $ if and only if it is free over its associated order in the Hopf algebra giving the canonical nonclassical Hopf-Galois structure on the extension. 
\end{abstract}

\section{Introduction and Statement of Results}

Throughout let $ L/K $ be a finite Galois extension of fields with nonabelian Galois group $ G $. By the theorem of Greither and Pareigis
\begin{itemize}
\item the Hopf-Galois structures on $ L/K $ are in bijective correspondence with the regular subgroups of $ \perm{G} $ normalized by $ \lambda(G) $, the image of $ G $ under the left regular embedding,
\item the Hopf algebra corresponding to a regular subgroup $ N $ is $ H_{N}=L[N]^{G} $, where $ G $ acts on $ L $ as Galois automorphisms and on $ N $ by conjugation via the embedding $ \lambda $,
\item such a Hopf algebra acts on $ L $ by
\[ \left( \sum_{\eta \in N} c_{\eta} \eta \right) \cdot x = \sum_{\eta \in N} c_{\eta} \eta^{-1}(1_{G})[x] \hspace{5mm} (c_{\eta} \in L, \; x \in L). \]
\end{itemize}

Two examples of such regular subgroups are $ \lambda(G) $ itself and $ \rho(G) $, the image of $ G $ under the right regular embedding. The latter corresponds to the classical structure, with Hopf algebra $ K[G] $ and its usual action on $ L $. Since $ G $ is nonabelian we have $ \lambda(G) \neq \rho(G) $, and the subgroup $ \lambda(G) $ corresponds to a canonical nonclassical Hopf-Galois structure on $ L/K $, whose Hopf algebra we will denote by $ H_{\lambda} $. 
\\ \\
Our main result is the following:

\begin{thm} \label{thm_main}
Let $ L/K $ be a finite nonabelian Galois extension of local fields or global fields with group $ G $, and suppose that $ \B $ is a $ G $-stable fractional ideal of $ L $. Then $ \B $ is free over its associated order in $ K[G] $ if and only if it is free over its associated order in $ H_{\lambda} $. 
\end{thm}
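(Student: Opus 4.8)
The plan is to identify $ H_{\lambda} $, together with its action on $ L $, with the ring of $ K[G] $-module endomorphisms of $ L $ (where $ L $ carries the classical action of $ K[G] $), and then to reduce the theorem to an elementary statement about full lattices in a finite-dimensional algebra. For the first step, write an arbitrary element of $ \mathrm{End}_{K}(L) $ uniquely as $ \sum_{g \in G} a_{g} g $ with $ a_{g} \in L $, the $ g $ denoting the Galois automorphisms (which are linearly independent over $ L $). A direct calculation shows that such an endomorphism commutes with every $ g \in G $ exactly when $ \sigma(a_{g}) = a_{\sigma g \sigma^{-1}} $ for all $ \sigma, g \in G $; comparing this with the Greither--Pareigis description of $ H_{\lambda} = L[\lambda(G)]^{G} $ and of its action on $ L $, one sees that $ \mathrm{End}_{K[G]}(L) $ and the image of $ H_{\lambda} $ in $ \mathrm{End}_{K}(L) $ coincide. (This ultimately reflects the fact that $ \rho(G) $ and $ \lambda(G) $ are mutual centralisers in $ \perm{G} $.) Thus the actions of $ K[G] $ and $ H_{\lambda} $ on $ L $ commute, and each algebra is precisely the centraliser of the other inside $ \mathrm{End}_{K}(L) $.

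For the second step, choose by the normal basis theorem a $ K[G] $-module isomorphism $ \phi \colon K[G] \xrightarrow{\ \sim\ } L $. Being a $ G $-stable fractional ideal, $ \B $ is a full $ \OK $-lattice in $ L $ that is stable under $ \OK[G] $, so $ \B' := \phi^{-1}(\B) $ is a full $ \OK $-lattice in $ K[G] $ that is stable under left multiplication by $ \OK[G] $. Transporting through $ \phi $, the classical action of $ K[G] $ on $ L $ becomes left multiplication on $ K[G] $, while $ \mathrm{End}_{K[G]}(L) = H_{\lambda} $ becomes the algebra of right multiplications on $ K[G] $. Hence the associated order of $ \B $ in $ K[G] $ corresponds to the left order $ O_{l}(\B') = \{ a \in K[G] : a\B' \subseteq \B' \} $, the associated order of $ \B $ in $ H_{\lambda} $ corresponds to the right order $ O_{r}(\B') = \{ a \in K[G] : \B' a \subseteq \B' \} $, and $ \B $ is free over its associated order in $ K[G] $ (resp.\ in $ H_{\lambda} $) if and only if $ \B' $ is free of rank one as a left $ O_{l}(\B') $-module (resp.\ as a right $ O_{r}(\B') $-module).

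The third step is the purely ring-theoretic assertion: a full $ \OK $-lattice $ M $ in a finite-dimensional $ K $-algebra $ A $ is free of rank one as a left module over $ O_{l}(M) $ if and only if it is free of rank one as a right module over $ O_{r}(M) $. Indeed, if $ M = O_{l}(M)c $ with $ c \in M $, then $ AM = A $ (fullness) and $ A \cdot O_{l}(M) = A $ (since $ 1 \in O_{l}(M) $) together force $ Ac = A $, so $ c $ is a unit of $ A $; one then checks $ O_{r}(M) = c^{-1} O_{l}(M) c $, whence $ M = O_{l}(M)c = c \cdot O_{r}(M) $ and $ M $ is free of rank one over $ O_{r}(M) $. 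The converse is symmetric. Taking $ M = \B' $ and $ A = K[G] $ finishes the proof, uniformly in the local and global cases and in all characteristics. The only step that is not formal is the first: recognising the canonical nonclassical Hopf algebra as the endomorphism ring of $ L $ over $ K[G] $ — the precise sense in which the two structures are ``dual'' — is where the content lies.
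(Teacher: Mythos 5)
Your argument is correct, but it follows a genuinely different and more structural route than the paper's. You identify $H_{\lambda}$, via its Greither--Pareigis action, with the centraliser $\mathrm{End}_{K[G]}(L)$ of the classical action inside $\mathrm{End}_{K}(L)$ (the one point needing a moment's care is the inversion in the action formula, $\lambda(g)$ acting as $g^{-1}$; but the covariance condition $\sigma(a_{g}) = a_{\sigma g \sigma^{-1}}$ is stable under the reindexing $g \mapsto g^{-1}$, and the action map is an injective algebra homomorphism by Dedekind's lemma, so the identification is indeed an isomorphism of $K$-algebras). You then transport everything to $K[G]$ by a normal basis isomorphism and reduce to the standard fact that a full $\OK$-lattice $M$ in a finite-dimensional $K$-algebra is free of rank one over its left order if and only if it is free of rank one over its right order, since a generator is forced to be a unit $c$ and $O_{r}(M) = c^{-1}O_{l}(M)c$. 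The paper instead proves the equivalence of normal basis generators (Theorem 2.1) by Galois descent and a transition-matrix comparison of $T_{\lambda(G)}(x)$ with $T_{\rho(G)}(x)$, and then, from an $\OK$-basis $\{a_{i}(x)\}$ of $\B$, explicitly constructs dual-basis elements $h_{i} \in \A_{\lambda}$ with $h_{i}\cdot x = a_{i}(x)$ (and conversely elements $a_{i} \in \A_{K[G]}$), the key input being the commutation rule $h\cdot z(t) = z(h\cdot t)$ of Lemma 3.3 --- which is exactly the statement your identification renders automatic, while your conjugation formula $O_{r} = c^{-1}O_{l}c$ is the structural form of the paper's explicit dual-basis computations. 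Your route buys conceptual clarity (it isolates precisely why the theorem holds: $\lambda(G)$ and $\rho(G)$ are mutual centralisers, so $H_{\lambda}$ acts as $K[G]^{\mathrm{op}}$ on the other side), works uniformly in both the local and global settings and in all characteristics, and recovers the paper's Theorem 2.1 as a byproduct; the paper's route stays entirely inside the Greither--Pareigis formalism and yields explicit $\OK$-bases of both associated orders, which is valuable for concrete computations.
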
 

Note that we make no restriction on the characteristic of $ K $. Some immediate corollaries of this are:

\begin{cor}
Suppose that $ L/K $ is an extension of local fields and is at most tamely ramified. Then $ \OL $ is free over its associated order in $ H_{\lambda} $.
\end{cor}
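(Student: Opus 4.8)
The plan is to read the Corollary off Theorem~\ref{thm_main} together with Noether's classical theorem on tame extensions. Suppose $L/K$ is an extension of local fields which is at most tamely ramified. By Noether's theorem $\OL$ is free of rank one as an $\OK[G]$-module; fix a free generator $\theta\in\OL$, so $\OL=\OK[G]\cdot\theta$ and, extending scalars to $K$, $\theta$ is a normal basis generator, i.e.\ the $K$-linear map $K[G]\to L$, $\chi\mapsto\chi\cdot\theta$, is bijective. Now for any $\psi\in\A_{K[G]}(\OL)$ we have $\psi\cdot\theta\in\psi\cdot\OL\subseteq\OL=\OK[G]\cdot\theta$, say $\psi\cdot\theta=\phi\cdot\theta$ with $\phi\in\OK[G]$; injectivity of $\chi\mapsto\chi\cdot\theta$ forces $\psi=\phi\in\OK[G]$. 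Hence $\A_{K[G]}(\OL)=\OK[G]$, so that $\OL$ is (trivially) free over its associated order in $K[G]$.

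Since $\OL$ is visibly a $G$-stable fractional ideal of $L$, Theorem~\ref{thm_main} applies with $\B=\OL$: the ideal $\OL$ is free over its associated order in $K[G]$ if and only if it is free over its associated order in $H_{\lambda}$. By the previous paragraph the first condition holds, hence so does the second, which is precisely the assertion of the Corollary.

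Granting Theorem~\ref{thm_main} there is no further obstacle, the only non-formal ingredient being Noether's theorem. If one instead wanted an argument not invoking the full strength of Theorem~\ref{thm_main} — and this is, I expect, the mechanism behind that theorem — one would observe that inside $\mathrm{End}_{K}(L)$ the image of $H_{\lambda}$ under its action on $L$ is exactly the centralizer of the image of $K[G]$ under the classical action (the statement that $\rho(G)$ and $\lambda(G)$ centralize one another in $\perm{G}$). Choosing a normal basis generator $\alpha$ and transporting along the $K[G]$-isomorphism $K[G]\xrightarrow{\sim}L$, $\chi\mapsto\chi\cdot\alpha$, the classical action of $K[G]$ becomes left multiplication on $K[G]$, the action of $H_{\lambda}$ becomes right multiplication, and $\OL$ becomes a full $\OK$-lattice $X$ in $K[G]$ whose left and right orders correspond to $\A_{K[G]}(\OL)$ and $\A_{H_{\lambda}}(\OL)$ respectively; one then checks directly that a full lattice is free of rank one over its left order iff it is free of rank one over its right order — if $X=\mathcal{O}_{l}(X)\,x$ then $x$ is a unit of $K[G]$, $\mathcal{O}_{r}(X)=x^{-1}\mathcal{O}_{l}(X)\,x$, and $X=x\,\mathcal{O}_{r}(X)$. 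Taking $\alpha=\theta$ the free generator, in the tame case $X$ is the order $\OK[G]$ itself, both one-sided orders coincide with $X$, and freeness is immediate, recovering the Corollary.
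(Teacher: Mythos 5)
Your proof is correct and follows the same route as the paper: invoke Noether's theorem to get $\OL$ free over $\OK[G]$ and then apply Theorem~\ref{thm_main} with $\B=\OL$. The only addition is that you make explicit the (standard, correct) verification that freeness over $\OK[G]$ forces $\A_{K[G]}(\OL)=\OK[G]$, a step the paper leaves implicit; your closing sketch of a left/right-order argument is a reasonable gloss on the mechanism of Theorem~\ref{thm_main} but is not needed for the corollary.
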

\begin{proof}
In this case $ \OL $ is a free $ \OK[G] $-module by Noether's Theorem, so Theorem \eqref{thm_main} applies. 
\end{proof}

\begin{cor}
Suppose that $ L/K $ is an extension of global fields and is at most tamely ramified. Then $ \OL $ is locally free over its associated order in $ H_{\lambda} $.
\end{cor}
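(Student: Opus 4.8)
The plan is to deduce this from the local case and the global form of Noether's theorem, by localising at each prime of $\OK$; the argument runs parallel to the proof of the preceding corollary. First I would fix a prime $\p$ of $\OK$ and a prime $\P$ of $\OL$ above it and pass to completions. Since tameness is a local condition, $L_{\P}/K_{\p}$ is a tamely ramified extension of local fields with Galois group the decomposition group $D_{\P}\leq G$; by Noether's theorem $\mathcal{O}_{L_{\P}}$ is a free $\mathcal{O}_{K_{\p}}[D_{\P}]$-module, so $\mathcal{O}_{K_{\p}}[D_{\P}]$ is its associated order in $K_{\p}[D_{\P}]$.

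Next I would apply the local theory to $L_{\P}/K_{\p}$. If $D_{\P}$ is nonabelian, the preceding corollary (equivalently, Theorem~\ref{thm_main} applied to $L_{\P}/K_{\p}$) gives that $\mathcal{O}_{L_{\P}}$ is free over its associated order in the Hopf algebra $H_{\lambda}$ affording the canonical nonclassical Hopf--Galois structure on $L_{\P}/K_{\p}$. If $D_{\P}$ is abelian then $\lambda(D_{\P}) = \rho(D_{\P})$, this Hopf algebra is just $K_{\p}[D_{\P}]$, and the same conclusion follows directly from Noether's theorem. Either way $\mathcal{O}_{L_{\P}}$ is free over its associated order in the local Hopf algebra.

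Finally I would reassemble these local statements. The point I expect to need the most care is the compatibility of the canonical nonclassical structure, and of the formation of associated orders, with completion at $\p$: one must verify that $H_{\lambda}\otimes_{K}K_{\p}$ and the completion at $\p$ of the associated order of $\OL$ in $H_{\lambda}$ are obtained from the local data at $\P$ by the appropriate Greither--Pareigis induction, using that $G$ acts transitively on the primes of $\OL$ above $\p$ so that $L\otimes_{K}K_{\p}$ is the Galois algebra induced from $L_{\P}/K_{\p}$. Granting this, the local freeness established above says precisely that $\OL$ is locally free over its associated order in $H_{\lambda}$. As all of the inputs (Noether's theorem, the Greither--Pareigis formalism, Theorem~\ref{thm_main}) are insensitive to the characteristic of $K$, no restriction on the characteristic is needed.
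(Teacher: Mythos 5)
You take a genuinely different route from the paper --- completing at a prime $\P$ of $\OL$ and working with the decomposition group $D_{\P}$ --- and as written it has a real gap: the whole weight of the argument rests on the compatibility you only ``grant'' in the final paragraph. The Hopf algebra relevant to the local statement is $H_{\lambda}\otimes_{K}K_{\p}$, which has $K_{\p}$-dimension $|G|$ and acts on the Galois algebra $L\otimes_{K}K_{\p}\cong\prod_{\P\mid\p}L_{\P}$; it is not the canonical nonclassical structure on the field extension $L_{\P}/K_{\p}$, whose Hopf algebra has dimension $|D_{\P}|$. (Your case split is a symptom of the mismatch: when $D_{\P}$ is abelian the structure you invoke at $\P$ is the classical one, yet $H_{\lambda}\otimes_{K}K_{\p}$ is still nonclassical because $G$ is nonabelian.) To pass from freeness of $ {\goth O}_{L_{\P}} $ over its associated order in a Hopf--Galois structure on $L_{\P}/K_{\p}$ to freeness of $\OL\otimes_{\OK}\OKp$ over the completion of its associated order in $H_{\lambda}$, you would need a precise induction/Morita-type statement relating the two structures and their associated orders; nothing of that kind is proved (or even stated) in the paper, and establishing it is harder than the corollary itself. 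So the proposal does not yet prove the statement.

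The paper's proof sidesteps the decomposition group entirely: the proofs of Theorem \eqref{thm_main}, Theorem \eqref{thm_NBG} and the three lemmas nowhere use that $L$ is a field, so they apply verbatim to the $G$-Galois algebra $L\otimes_{K}K_{\p}$ over $K_{\p}$, with the same nonabelian $G$ and the same regular subgroup $\lambda(G)$. Tameness enters only through Noether's theorem, which gives that $\OL\otimes_{\OK}\OKp$ is free over $\OKp[G]$ for every prime $\p$ of $\OK$; applying the main theorem at each prime (together with the standard fact that forming associated orders commutes with completion) then gives local freeness over the associated order in $H_{\lambda}$. To repair your argument, either prove the induction compatibility you assumed, or simply replace that whole reduction by this observation.
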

\begin{proof}
The proof of Theorem \eqref{thm_main} does not depend on the fact that $ L $ is a field, so we could replace $ L $ with its completion at some prime $ \p $ of $ \OK $ (a Galois algebra). In this case, for each prime $ \p $ of $ \OK $ we have that $ \OLp $ is a free $ \OKp[G] $-module by Noether's Theorem, so Theorem \eqref{thm_main} applies at each prime, and so $ \OL $ is locally free over its associated order in $ H_{\lambda} $. 
\end{proof}

\begin{cor}
Suppose that $ K = \Q $ and that $ L/K $ is tame and that $ [L:\Q] $ is not divisible by $ 4 $. Then $ \OL $ is free over its associated order in $ H_{\lambda} $.
\end{cor}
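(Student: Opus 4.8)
The plan is to reduce the statement to Theorem~\ref{thm_main} and then to the classical theory of Galois module structure over $\Q$. Since $L/\Q$ is a (tame, nonabelian) Galois extension of global fields and $\OL$ is a $G$-stable fractional ideal of $L$, Theorem~\ref{thm_main} applies, so it suffices to prove that $\OL$ is free over its associated order $\A$ in $\Q[G]$. Because $L/\Q$ is tame, Noether's Theorem shows that for each rational prime $p$ the associated order of $\OL\otimes_{\Z}\Z_p$ in $\Q_p[G]$ is $\Z_p[G]$; intersecting these inside $\Q[G]$ gives $\A=\Z[G]$, and at the same time exhibits $\OL$ as a locally free $\Z[G]$-module of rank one. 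Thus $\OL$ defines a class $(\OL)$ in the locally free classgroup $\Cl{\Z[G]}$, and --- since, as we shall see, $G$ has no irreducible symplectic character --- $\Q[G]$ satisfies the Eichler condition, so by Jacobinski's cancellation theorem a locally free rank-one $\Z[G]$-module is free exactly when its class vanishes. Hence the corollary reduces to showing $(\OL)=0$.

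To establish this we invoke Taylor's theorem (the Fr\"ohlich conjecture): for a tame Galois extension $L/\Q$ with group $G$, the class $(\OL)$ equals the class $W_{L/\Q}\in\Cl{\Z[G]}$ assembled from the Artin root numbers of the irreducible symplectic characters of $G$. It therefore suffices to check that, when $4\nmid[L:\Q]=|G|$, the group $G$ has no irreducible symplectic character, for then $W_{L/\Q}$ is trivial. If $|G|$ is odd this is classical: the only self-inverse conjugacy class of $G$ is $\{1\}$, so the trivial character is the only real-valued irreducible character. If $|G|=2m$ with $m$ odd, then the sign of the left regular representation (an involution acts as a product of $m$ transpositions, hence as an odd permutation) gives a surjection $G\twoheadrightarrow\{\pm1\}$ with kernel $N$ of order $m$; choosing an involution $t\in G$ we obtain $G=N\rtimes\langle t\rangle$ with $t^2=1$. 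As $|N|$ is odd, $N$ has no nontrivial real-valued irreducible character, so Clifford theory shows that every real-valued irreducible character of $G$ of degree greater than $1$ is of the form $\mathrm{Ind}_N^G\psi$ with $\psi\in\mathrm{Irr}(N)$ satisfying $\psi^t=\overline{\psi}\neq\psi$ (linear characters being automatically orthogonal). A direct evaluation of the Frobenius--Schur indicator of such an induced character --- splitting the defining sum over $N$ and over $Nt$, using $\sum_{n\in N}\psi(n^2)=|N|\,\nu_2(\psi)=0$ since $\psi$ is not real, and applying the Schur orthogonality relations for the matrix coefficients of $\psi$ --- yields the value $+1$. Hence $G$ has no irreducible symplectic character, and the corollary follows.

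The only genuinely substantial ingredient is Taylor's theorem; the reductions above and the two-case group-theoretic argument are routine, the one point demanding care being the computation of $\nu_2(\mathrm{Ind}_N^G\psi)$ that underlies the fact that $4\nmid|G|$ forces $G$ to have no symplectic character. It should be stressed that the hypothesis $4\nmid[L:\Q]$ cannot simply be dropped: there exist tame $Q_8$-extensions $L/\Q$ for which the symplectic Artin root number is $-1$, so that $\OL$ is not free over $\Z[G]$ and hence --- by Theorem~\ref{thm_main} --- not free over its associated order in $H_{\lambda}$ either.
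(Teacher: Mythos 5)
Your proposal is correct, and at the top level it takes the same route as the paper: reduce via Theorem~\ref{thm_main} to the classical statement that $\OL$ is free over $\Z[G]$, and get that statement from Taylor's theorem. The difference is one of granularity: the paper cites ``Taylor's Theorem'' for the whole package (tame over $\Q$ with $4\nmid[L:\Q]$ implies $\OL$ free over $\Z[G]$), whereas you unpack it --- Noether locally to identify $\A_{K[G]}=\Z[G]$ and get local freeness, Taylor's theorem proper to identify the class of $\OL$ in $\Cl{\Z[G]}$ with the symplectic root number class, Jacobinski cancellation under the Eichler condition to pass from trivial class to genuine freeness, and the group-theoretic lemma that a group of order not divisible by $4$ has no irreducible symplectic character (which kills the root number class and guarantees the Eichler condition simultaneously). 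This buys a self-contained explanation of why the hypothesis $4\nmid[L:\Q]$ is exactly what is needed, and your closing remark about tame $Q_{8}$-extensions with symplectic root number $-1$ correctly shows it cannot be dropped. One step of your sketch is asserted rather than proved, and it is precisely the point you flag as delicate: for $\chi=\mathrm{Ind}_{N}^{G}\psi$ with $\psi^{t}=\overline{\psi}\neq\psi$, splitting the Frobenius--Schur sum and applying Schur orthogonality does not by itself yield $+1$; it yields $\frac{1}{|N|}\sum_{n\in N}\psi(n\,tnt^{-1})=\pm 1$, the sign being the symmetry type of the (unique up to scalar) nondegenerate pairing $\beta$ on a space affording $\psi$ that satisfies $\beta(\rho(n)u,\rho(tnt^{-1})v)=\beta(u,v)$, equivalently of the intertwiner between $\psi\circ t$ and $\overline{\psi}$. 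To pin down $+1$ you need one further observation: if that pairing were alternating it would force $\psi(1)$ to be even, which is impossible because $\psi(1)$ divides the odd number $|N|$. With that sentence added your argument is complete and agrees with the classical fact the paper is implicitly invoking.
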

\begin{proof}
In this case $ \OL $ is a free $ \Z[G] $-module by Taylor's Theorem, so Theorem \eqref{thm_main} applies. 
\end{proof}

\begin{cor}
Suppose that $ L/K $ is an extension of $ p $-adic fields which is weakly ramified. Then $ \OL $ is free over its associated order in $ H_{\lambda} $.
\end{cor}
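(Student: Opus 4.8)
The plan is to deduce this from Theorem~\ref{thm_main} in the same way as the preceding corollaries: reduce to the classical Hopf-Galois structure, settle that case with a known result, and let the theorem carry the conclusion across to $ H_{\lambda} $. Since $ \OL $ is visibly a $ G $-stable fractional ideal of $ L $, Theorem~\ref{thm_main} --- applied with $ \B = \OL $, and with $ G $ nonabelian as assumed throughout --- tells us that $ \OL $ is free over its associated order in $ H_{\lambda} $ if and only if it is free over its associated order in $ K[G] $. So it suffices to prove that, for a weakly ramified extension of $ p $-adic fields, $ \OL $ is free over its associated order in $ K[G] $.

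For that classical statement I would appeal to the integral Galois module theory of weakly ramified extensions of local fields. Recall that $ L/K $ is weakly ramified when its second ramification group (in the lower numbering) is trivial --- a mild condition on the wild ramification, just beyond the tame case. Under exactly this hypothesis it is a theorem of Johnston that $ \OL $ is free over its associated order $ \A $ in $ K[G] $; in fact Johnston exhibits an explicit generator, described in terms of the ramification data of $ L/K $. Combining this with Theorem~\ref{thm_main} gives the corollary at once.

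I do not expect a genuine obstacle. All of the content is carried by the two ingredients --- Johnston's theorem on the $ K[G] $ side, and Theorem~\ref{thm_main} itself, which is precisely what transports freeness over the associated order between the classical structure and the canonical nonclassical structure $ H_{\lambda} $ --- and what is left is merely to check that their hypotheses are in force (the extension is weakly ramified, and $ G $ is nonabelian). The one point worth flagging is that, should one want something more constructive than this black-box deduction, one could attempt to transport Johnston's explicit $ \A $-generator of $ \OL $ through the identification of associated orders that appears in the proof of Theorem~\ref{thm_main}, thereby writing down a generator of $ \OL $ over its associated order in $ H_{\lambda} $ explicitly; for a preliminary report, however, the quoted deduction is the efficient route.
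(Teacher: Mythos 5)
Your proposal is correct and matches the paper's own argument exactly: cite Johnston's theorem that $ \OL $ is free over its associated order in $ K[G] $ for weakly ramified extensions of $ p $-adic fields, then apply Theorem \eqref{thm_main} to transfer freeness to the associated order in $ H_{\lambda} $. The extra remarks on explicit generators are a reasonable aside but not needed.
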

\begin{proof}
In this case $ \OL $ is free over its associated order in $ K[G] $ by a theorem of Johnston, so Theorem \eqref{thm_main} applies.
\end{proof}

\begin{cor}
Suppose that $ G $ is simple. By a result of Byott, $ L/K $ admits exactly two Hopf-Galois structures: the classical structure and the canonical nonclassical structure, and by Theorem \eqref{thm_main} $ \OL $ is either free over its associated order in both of these structures or in neither of them. 
\end{cor}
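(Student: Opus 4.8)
The plan is to split the argument into two logically independent pieces: first, a purely group-theoretic classification showing that \emph{exactly} two Hopf-Galois structures exist on $L/K$ (this is the content I would attribute to Byott), and second, a formal application of Theorem~\ref{thm_main} to transfer the freeness statement between them. Only the first piece requires genuine work; the second is immediate once the complete list of structures is known.

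For the classification I would start from the Greither--Pareigis dictionary already recalled in the introduction: the Hopf-Galois structures on $L/K$ are in bijection with the regular subgroups $N \le \perm{G}$ normalized by $\lambda(G)$, with $\lambda(G)$ and $\rho(G)$ yielding $H_{\lambda}$ and $K[G]$ respectively. So it suffices to show that, when $G$ is nonabelian simple, these are the only two such $N$. Let $N$ be any regular subgroup normalized by $\lambda(G)$. Since $\lambda(G)$ normalizes both $N$ and $\lambda(G)$, the intersection $N \cap \lambda(G)$ is normal in $\lambda(G) \cong G$, so simplicity forces it to be trivial or all of $\lambda(G)$. In the latter case $\lambda(G) \subseteq N$, and equality of orders gives $N = \lambda(G)$. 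In the former case, conjugation gives a homomorphism $\lambda(G) \to \Aut{N}$ whose kernel $\lambda(G) \cap C_{\perm{G}}(N)$ is again normal in the simple group $\lambda(G)$; if this kernel is everything, then $\lambda(G)$ centralizes $N$, hence $N \subseteq C_{\perm{G}}(\lambda(G)) = \rho(G)$, and equality of orders gives $N = \rho(G)$.

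The main obstacle is the one remaining subcase, in which $N \cap \lambda(G) = 1$ and $\lambda(G)$ embeds faithfully into $\Aut{N}$, so that $N \rtimes \lambda(G)$ has order $|G|^{2}$. Here one must show that no genuinely new regular subgroup can arise. Equivalently, via Byott's translation, $\lambda(G)$ becomes a regular subgroup of $\Hol{N} = N \rtimes \Aut{N}$ isomorphic to $G$, and the task is to rule out regular copies of a nonabelian simple $G$ inside $\Hol{N}$ when $N \not\cong G$ and to count them precisely when $N \cong G$. This is exactly the delicate group-theoretic input supplied by Byott's theorem on extensions with simple Galois group, which I would invoke rather than reprove; it closes off the exotic subcase and shows that the two structures found above exhaust all possibilities.

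With the classification in hand, the corollary follows formally. The only associated orders in play are those of $\B$ (here $\OL$) in $K[G]$ and in $H_{\lambda}$, and Theorem~\ref{thm_main} asserts that $\OL$ is free over the former if and only if it is free over the latter. Since there are no further Hopf-Galois structures, $\OL$ is free over its associated order in both structures simultaneously or in neither of them, as claimed.
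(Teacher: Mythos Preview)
Your proposal is correct and follows the same approach as the paper: invoke Byott's classification to see that the classical and canonical nonclassical structures are the only two, then apply Theorem~\ref{thm_main}. In fact the paper gives no separate proof at all---the entire argument is folded into the statement of the corollary---so your version is simply a more explicit unpacking of the same two steps, with some helpful commentary on what the group-theoretic input from Byott actually involves.
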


(Remember that in all of these we are assuming that $ L/K $ is Galois with nonabelian Galois group $ G $.) 

\section{Normal Basis Generators}

In this section we will prove the following theorem:

\begin{thm} \label{thm_NBG}
Let $ x \in L $. Then $ x $ is a $ K[G] $-generator of $ L $ if and only if $ x $ is an $ H_{\lambda} $-generator of $ L $. 
\end{thm}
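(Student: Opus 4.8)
The plan is to realise both $K[G]$ and $H_{\lambda}$ as subalgebras of $E := \mbox{End}_{K}(L)$ and to exploit that one is the centraliser of the other. Since $L/K$ is Galois, $E$ is the twisted group algebra $\bigoplus_{\sigma \in G} L\sigma$, with $\sigma c = \sigma(c)\sigma$ for $c \in L$. Under its usual action $K[G]$ is identified with the subalgebra $A := \bigoplus_{\sigma \in G} K\sigma$, and under the Greither--Pareigis action $H_{\lambda}$ is identified with $B := \bigl\{ \sum_{\sigma \in G} d_{\sigma}\sigma : d_{\tau\sigma\tau^{-1}} = \tau(d_{\sigma}) \mbox{ for all } \sigma, \tau \in G \bigr\}$: indeed $\sum_{g} c_{g}\lambda(g)$ acts on $L$ as the endomorphism $y \mapsto \sum_{g} c_{g}g^{-1}(y)$, that is as $\sum_{\sigma} c_{\sigma^{-1}}\sigma \in E$, and the $G$-invariance condition defining $H_{\lambda}$ translates precisely into the displayed condition on the coefficients. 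A short direct computation then shows that $B$ is exactly the centraliser $C_{E}(A)$ of $A$ in $E$ — equivalently, the ring $\mbox{End}_{K[G]}(L)$ of $G$-equivariant $K$-endomorphisms of $L$; this reflects the fact that $\lambda(G)$ and $\rho(G)$ centralise one another in $\perm{G}$. Since $\dim_{K} H_{\lambda} = |G| = \dim_{K} B$, the action map $H_{\lambda} \to B$ is an isomorphism, so $x$ is an $H_{\lambda}$-generator of $L$ if and only if $B \cdot x = L$, while $x$ is a $K[G]$-generator of $L$ if and only if $A \cdot x = L$.

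I would then bring in the classical normal basis theorem, which holds for every finite Galois extension of fields regardless of characteristic (this is why no hypothesis on $\mbox{char}(K)$ is needed): choose $x_{0} \in L$ with $A \cdot x_{0} = L$, giving an isomorphism of left $A$-modules $\Psi : A \to L$, $a \mapsto a(x_{0})$. Since $B = C_{E}(A) = \mbox{End}_{A}(L)$, conjugating the $B$-action by $\Psi$ turns it into right multiplication on $A$: for $b \in B$ and $a \in A$ we have $b(a(x_{0})) = a(b(x_{0})) = a(\beta_{b}(x_{0}))$, where $\beta_{b} := \Psi^{-1}(b(x_{0})) \in A$, so $b$ corresponds under $\Psi$ to the map $a \mapsto a\beta_{b}$. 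The assignment $b \mapsto \beta_{b}$ is injective (because $A \cdot x_{0} = L$) between $K$-vector spaces of the same dimension $|G|$, hence a bijection onto $A$.

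Finally, given $x \in L$, set $a_{x} := \Psi^{-1}(x) \in A$. Then $A \cdot x = L$ if and only if $A a_{x} = A$, i.e. $a_{x}$ has a left inverse in $A$, i.e. $a_{x} \in A^{\times}$ (as $A$ is a finite-dimensional $K$-algebra), i.e. $a_{x} A = A$; and $B \cdot x = L$ if and only if $\{ a_{x}\beta_{b} : b \in B \} = a_{x}A$ equals $A$, by the description of the $B$-action above. These two conditions are the same, which proves the theorem. The only substantive step is the identification $B = C_{E}(A)$; everything after it is formal, and running the argument in reverse also shows that $L$ is free of rank one over $H_{\lambda}$, i.e. recovers the normal basis theorem for the canonical nonclassical structure.
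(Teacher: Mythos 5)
Your proof is correct, but it takes a genuinely different route from the paper. The paper stays inside the Greither--Pareigis descent picture: it passes from $L \cong (GL)^{G}$ to $GL=\mbox{Map}(G,L)$ via Lemma \eqref{lem_fixed_generators}, characterises $L[N]$-generation of $GL$ by nonsingularity of the matrix $T_{N}(x)=(\eta(g)[x])$ in Lemma \eqref{lem_transition_matrix}, and then the theorem reduces to the observation that $T_{\lambda(G)}(x)$ is row equivalent to the transpose of $T_{\rho(G)}(x)$, i.e.\ to ``a matrix is nonsingular iff its transpose is''. You instead work inside $E=\mathrm{End}_{K}(L)=\bigoplus_{\sigma\in G}L\sigma$, identify the image $B$ of $H_{\lambda}$ as the centraliser $C_{E}(K[G])=\mathrm{End}_{K[G]}(L)$ (your coefficient computation is right, and injectivity of $H_{\lambda}\to B$ is immediate from the explicit reindexing formula, so the dimension count does finish the identification), transport everything along a normal basis isomorphism $\Psi\colon K[G]\xrightarrow{\sim}L$, under which $K[G]$-generation of $x$ becomes left invertibility of $a_{x}=\Psi^{-1}(x)$ and $H_{\lambda}$-generation becomes right invertibility, and conclude because one-sided units in a finite-dimensional algebra are units. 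What each approach buys: the paper's argument is self-contained and pointwise in $x$ --- it needs neither the classical normal basis theorem nor the identification of $\mathrm{End}_{K}(L)$ with the twisted group algebra (Dedekind independence), both of which you invoke as standard, characteristic-free inputs --- whereas your argument isolates the structural reason the theorem holds: $H_{\lambda}$ acts on $L$ as the full commutant of $K[G]$, which is precisely the ``$\lambda(G)$ commutes with $\rho(G)$'' phenomenon the author speculates about in the final section, and in passing it recovers that $L$ is free of rank one over $H_{\lambda}$ and makes the left/right symmetry behind the ``if and only if'' transparent.
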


To do this, for this section only we place ourselves in a slightly more general situation, and adopt the notation used in the proof of the theorem of Greither and Pareigis in Childs: Taming Wild Extensions, Chapter 2. 
\begin{itemize}
\item Let $ N $ be any regular subgroup of $ \perm{G} $ that is stable under the action of $ G $ by conjugation via the left regular embedding $ \lambda $. 
\item Let $ GL = \mbox{Map}(G,L) $, and let $ \{ u_{g} \mid g \in G \} $ be an $ L $-basis of mutually orthogonal idempotents. That is:
\[ u_{g}(\sigma) = \delta_{g,\sigma} \mbox{ for all } g,\sigma \in G. \]
\item The group $ N $ acts on $ GL $ by permuting the subscripts of the idempotents $ u_{g} $:
\[ \eta \cdot u_{g} = u_{\eta(g)} \mbox{ for any } \eta \in N \mbox{ and } g \in G. \]
By extending this action $ L $-linearly, we can view $ GL $ as an $ L[N] $-module.
\item As described above, $ G $ acts on $ L[N] $ by acting on $ L $ as Galois automorphisms and on $ N $ by conjugation via $ \lambda $. The group $ G $ also acts on $ GL $ by acting on $ L $ as Galois automorphisms and on the idempotents $ u_{g} $ by left translation of the subscripts.  
\item We have that $ GL $ is an $ L[N] $-Galois extension of $ L $ and, by Galois descent, we obtain that $ (GL)^{G} $ is an $ L[N]^{G} $-Galois extension of $ K $. Note also that $ L \otimes_{K} L[N]^{G} = L[N] $ and $ L \otimes_{K} (GL)^{G} = GL $. 
\item Finally, we identify $ (GL)^{G} $ with $ L $ via the isomorphism $ L \xrightarrow{\sim}{} (GL)^{G} $ defined by
\[ x \mapsto f_{x} =  \sum_{g \in G} g(x) u_{g} \mbox{ for all } x \in L. \]
The action of $ L[N]^{G} $ on $ L $ (as given in the statement of the theorem of Greither and Pareigis) is defined via the inverse of this isomorphism. 
\end{itemize}

With all this notation to hand, we establish two lemmas concerning normal basis generators and then prove Theorem \eqref{thm_NBG}. 

\begin{lemma} \label{lem_fixed_generators}
An element $ f_{x} \in (GL)^{G} $ is an $ L[N]^{G} $-generator of $ (GL)^{G} $ if and only if it is an $ L[N] $-generator of $ GL $. 
\end{lemma}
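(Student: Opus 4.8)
The plan is to exploit the base-change relations $L \otimes_{K} L[N]^{G} = L[N]$ and $L \otimes_{K} (GL)^{G} = GL$ together with faithful flatness of $L$ over $K$. First I would recall the general principle: if $A$ is a $K$-algebra, $M$ an $A$-module, and $m \in M$, then $m$ generates $M$ over $A$ if and only if $1 \otimes m$ generates $L \otimes_{K} M$ over $L \otimes_{K} A$, because $L/K$ is faithfully flat — surjectivity of the structure map $A \to M$, $a \mapsto am$, can be checked after the faithfully flat base change to $L$. Applying this with $A = L[N]^{G}$, $M = (GL)^{G}$, and $m = f_{x}$, the statement "$f_{x}$ is an $L[N]^{G}$-generator of $(GL)^{G}$" is equivalent to "$1 \otimes f_{x}$ is an $(L \otimes_{K} L[N]^{G})$-generator of $L \otimes_{K} (GL)^{G}$", i.e. an $L[N]$-generator of $GL$.

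The one thing that needs care is the identification of $1 \otimes f_{x} \in L \otimes_{K} (GL)^{G} = GL$ with an element of $GL$ under the given isomorphism, and checking that "being an $L[N]$-generator" is insensitive to which such element we use. Under $L \otimes_{K}(GL)^{G} \xrightarrow{\sim} GL$, the element $1 \otimes f_{x}$ maps to $f_{x} = \sum_{g \in G} g(x) u_{g}$ viewed now inside the larger ring $GL$ (not just its $G$-fixed part), and the $L[N]$-module structure on $GL$ obtained by base change agrees with the one defined above via permutation of idempotent subscripts. So the equivalence reads exactly as: $f_{x}$ generates $(GL)^{G}$ over $L[N]^{G}$ iff $f_{x}$ generates $GL$ over $L[N]$, which is the claim.

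I expect the main obstacle to be purely bookkeeping: one must verify that the $L[N]$-action on $GL$ coming from the tensor-product description $GL = L \otimes_K (GL)^G$ coincides with the action $\eta \cdot u_g = u_{\eta(g)}$ used throughout, and that the isomorphism $L \xrightarrow{\sim} (GL)^G$ is compatible with these identifications. This is essentially the content of the Greither–Pareigis setup as presented in Childs, so I would either cite it directly or check it on the basis $\{u_g\}$. Once the identifications are pinned down, the proof is a one-line application of faithfully flat descent of surjectivity, with no further computation required.
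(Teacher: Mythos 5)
Your proposal is correct, but it proves the lemma by a different mechanism than the paper. The paper's proof is elementary and explicit: it fixes a $K$-basis $\{h_{1},\ldots,h_{n}\}$ of $L[N]^{G}$, observes that this is simultaneously an $L$-basis of $L[N]$, and then argues both directions by hand — if $f_{x}$ generates $(GL)^{G}$ over $L[N]^{G}$ then the $L$-span of the $h_{i}\cdot f_{x}$ is $L\otimes_{K}(GL)^{G}=GL$ and a dimension count makes them an $L$-basis; conversely, if $f_{x}$ generates $GL$ over $L[N]$ then the $h_{i}\cdot f_{x}$ are $L$-independent, hence $K$-independent, lie in $(GL)^{G}$, and a dimension count makes them a $K$-basis. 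You instead phrase the statement as surjectivity of the structure map $a\mapsto a\cdot f_{x}$ and invoke faithfully flat descent along $K\to L$ (trivially faithfully flat, as $L$ is a nonzero $K$-vector space), together with the identifications $L\otimes_{K}L[N]^{G}=L[N]$ and $L\otimes_{K}(GL)^{G}=GL$ already recorded in the setup. Both arguments ultimately rest on those two identifications, and the compatibility you flag (that the base-changed $L[N]$-action on $L\otimes_{K}(GL)^{G}$ matches the action $\eta\cdot u_{g}=u_{\eta(g)}$, and that $1\otimes f_{x}$ goes to $f_{x}$) does hold, because the $N$-action on $GL$ is extended $L$-linearly, so the multiplication maps $\ell\otimes h\mapsto \ell h$ and $\ell\otimes f\mapsto \ell f$ intertwine the actions; this is exactly the content of the Galois-descent statement in Childs that the paper cites, so your plan closes. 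What your route buys is brevity and generality: no choice of basis, no dimension counting, and the argument would apply verbatim to generators of modules that are not free or finite-dimensional. What the paper's route buys is self-containedness at the level of linear algebra — the explicit basis $\{h_{i}\cdot f_{x}\}$ is produced, which is in the spirit of the later, very concrete proofs of Propositions \ref{prop_classical_implies_nonclassical} and \ref{prop_nonclassical_implies_classical} — and it keeps the compatibility of actions visible rather than delegated to a cited identification, which is the one point your write-up must still pin down to be complete.
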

\begin{proof}
Let $ \{ h_{1}, \ldots ,h_{n} \} $ be a $ K $-basis of $ L[N]^{G} $, and note that this is also an $ L $-basis of $ L[N] $. Suppose first that $ f_{x} $ is an $ L[N]^{G} $ generator of $ (GL)^{G} $.  Then the $ K $-span of the elements $ h_{1} \cdot f_{x}, \ldots ,h_{n} \cdot f_{x} $ is $ (GL)^{G} $, so the $ L $-span of these elements is $ L \otimes_{K} (GL)^{G} = GL $. By considering dimensions we see that they must form an $ L $-basis of $ GL $. Conversely, suppose that $ f_{x} $ is an $ L[N] $-generator of $ GL $. Then the elements $ h_{1} \cdot f_{x}, \ldots ,h_{n} \cdot f_{x} $ are linearly independent over $ L $, so they are linearly independent over $ K $, and since $ (GL)^{G} $ is an $ L[N]^{G} $-module they all lie in $ (GL)^{G} $. Considering dimensions again, we conclude that they must form a $ K $-basis of $ (GL)^{G} $. 
\end{proof}

\begin{lemma} \label{lem_transition_matrix}
For $ x \in L $, the element $ f_{x} $ is an $ L[N] $-generator of $ GL $ if and only if the matrix 
\[ T_{N}(x) = ( \eta(g)[x] )_{\eta \in N, \; g \in G} \]
is nonsingular. 
\end{lemma}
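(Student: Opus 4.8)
The plan is to use the fact that $GL$ is free of rank one as an $L[N]$-module, and then to translate the condition "$f_x$ generates" into the nonvanishing of an explicit change-of-basis determinant, which will turn out to be $\det T_N(x)$ up to sign.

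First I would record that $GL$ is a free $L[N]$-module of rank one. Since $N \le \perm{G}$ is a regular subgroup, it acts regularly on $G$, so the orbit of $u_{1_G}$ under $N$ is exactly the idempotent basis $\{ u_g \mid g \in G \}$; hence $u_{1_G}$ is itself an $L[N]$-basis of $GL$. (Equivalently, this is the normal basis theorem applied to the $L[N]$-Galois extension $GL/L$ noted above.) In particular $\dim_L GL = |N|$, and since the group elements of $N$ form an $L$-basis of $L[N]$, an element $f_x$ is an $L[N]$-generator of $GL$ if and only if the family $\{ \eta \cdot f_x \mid \eta \in N \}$ spans $GL$ over $L$; comparing dimensions, this holds if and only if that family is an $L$-basis of $GL$.

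Next I would compute the coordinates of $\eta \cdot f_x$ with respect to the idempotent basis. From $f_x = \sum_{g \in G} g(x) u_g$ and $\eta \cdot u_g = u_{\eta(g)}$, reindexing the sum via $h = \eta(g)$ gives $\eta \cdot f_x = \sum_{h \in G} \bigl( \eta^{-1}(h) \bigr)[x] \, u_h$. Thus the matrix expressing the family $\{ \eta \cdot f_x \mid \eta \in N \}$ in terms of $\{ u_h \mid h \in G \}$ is $\bigl( (\eta^{-1}(h))[x] \bigr)_{\eta \in N, \; h \in G}$, and by the previous paragraph $f_x$ is an $L[N]$-generator of $GL$ if and only if this matrix is nonsingular. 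Finally, since $\eta \mapsto \eta^{-1}$ is a bijection of $N$, this matrix is obtained from $T_N(x)$ by a permutation of its rows, so the two have determinants differing only by a sign; hence $f_x$ is an $L[N]$-generator of $GL$ if and only if $T_N(x)$ is nonsingular, as claimed.

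I do not anticipate a genuine obstacle here. The only points that need care are getting the direction of the reindexing correct (so that it is $\eta^{-1}$, rather than $\eta$, that appears in the entries, a discrepancy then absorbed by the row permutation $\eta \mapsto \eta^{-1}$), and justifying that "$f_x$ generates $GL$ as an $L[N]$-module" is equivalent to "the $N$-translates of $f_x$ form an $L$-basis of $GL$", which follows from rank-one freeness together with the dimension count $\dim_L GL = |N|$.
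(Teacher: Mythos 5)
Your proof is correct and takes essentially the same approach as the paper: expand $\eta \cdot f_{x}$ in the idempotent basis $\{ u_{g} \}$, observe that generation over $L[N]$ amounts to the resulting transition matrix being nonsingular, and identify that matrix with $T_{N}(x)$. The only (harmless) difference is that you make explicit the row permutation $\eta \mapsto \eta^{-1}$ reconciling the computed entries $\eta^{-1}(g)[x]$ with the stated matrix, a point the paper passes over silently.
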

\begin{proof}
The set $ \{ u_{g} \mid g \in G \} $ is an $ L $-basis of $ GL $. For $ x \in L $ and $ \eta \in N $, we have
\begin{eqnarray*}
n \cdot f_{x} & = & \eta \cdot \left( \sum_{g \in G} g(x) u_{g} \right) \\
& = & \sum_{g \in G} g(x) u_{\eta(g)} \\
& = & \sum_{g \in G} \eta^{-1}(g)[x] u_{g},
\end{eqnarray*}
so the transition matrix from the set $ \{ u_{g} \mid g \in G \} $ to the set $ \{ \eta \cdot f_{x} \mid \eta \in N \} $ is the matrix $ T_{N}(x) $ above, and so $ f_{x} $ is an $ L[N] $-generator of $ GL $ if and only if this matrix is nonsingular. 
\end{proof}

{\it \noindent Proof of Theorem \eqref{thm_NBG}.}
By the theorem of Greither and Pareigis the classical Hopf-Galois structure on $ L/K $ corresponds to the regular subgroup $ \rho(G) $ of $ \perm{G} $ and the canonical nonclassical Hopf-Galois structure corresponds to the regular subgroup $ \lambda(G) $. By Lemma \eqref{lem_fixed_generators}, it is sufficient to show that for a fixed $ x \in L $, the element $ f_{x} $ is an $ L[\lambda(G)] $-generator of $ GL $ if and only if it is an $ L[\rho(G)] $-generator of $ GL $. But for any $ x \in L $, the matrix $ T_{\lambda(G)}(x) $ is row equivalent to the transpose of the matrix $ T_{\rho(G)}(x) $, so the result follows by Lemma \eqref{lem_transition_matrix}. \qed  

\section{Three Lemmas}

Henceforth, we will reserve the symbol $ \cdot $ for the action of an element $ h \in H_{\lambda} $ on an element $ x \in L $, viz. $ h \cdot x $, and use brackets for Galois actions and the action of an element $ z \in K[G] $ on an element $ x \in L $, viz. $ z(x) $. In this section we prove three lemmas which we will need in the proof of theorem \eqref{thm_main}. The first of these must be well known but we include it for completeness:

\begin{lemma} \label{lem_dual_basis}
Let $ x $ be a $ K[G] $-generator of $ L $ and let $ \{ \widehat{\sigma(x)} \mid \sigma \in G \} $ be the dual basis to $ \{ \sigma(x) \mid \sigma \in G \} $ with respect to the trace form on $ L/K $. Then, for each $ \sigma \in G $, we have $ \widehat{\sigma(x)} = \sigma(\widehat{x}) $. 
\end{lemma}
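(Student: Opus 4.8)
The plan is to exploit the trace form's compatibility with the Galois action. Let $\{\sigma(x) \mid \sigma \in G\}$ be the basis of $L/K$ coming from the normal basis generator $x$, and let $\{\widehat{\tau(x)} \mid \tau \in G\}$ be the dual basis characterized by $\Tr{\widehat{\tau(x)} \cdot \rho(x)} = \delta_{\tau, \rho}$ for all $\tau, \rho \in G$, where $\Tr{\cdot}$ denotes the trace from $L$ to $K$. The key observation I would use is that the trace form is Galois-invariant: for any $a, b \in L$ and any $\sigma \in G$, we have $\Tr{\sigma(a)\sigma(b)} = \Tr{ab}$, since $\Tr{\cdot}$ is a sum over the Galois group and left-multiplication by $\sigma$ merely permutes the summands.

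The main step is then a direct verification that $\sigma(\widehat{x})$ satisfies the defining property of $\widehat{\sigma(x)}$. First I would fix $\sigma \in G$ and compute, for an arbitrary $\rho \in G$,
\[
\Tr{\sigma(\widehat{x}) \cdot \rho(x)} = \Tr{\sigma\bigl(\widehat{x} \cdot \sigma^{-1}\rho(x)\bigr)} = \Tr{\widehat{x} \cdot (\sigma^{-1}\rho)(x)},
\]
using Galois-invariance of the trace in the second equality. By the defining property of the dual basis element $\widehat{x}$ (the case $\tau = 1_G$), this last expression equals $\delta_{1_G, \sigma^{-1}\rho}$, which is $1$ if $\rho = \sigma$ and $0$ otherwise. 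Hence $\Tr{\sigma(\widehat{x}) \cdot \rho(x)} = \delta_{\sigma, \rho}$ for all $\rho \in G$. Since the dual basis to $\{\rho(x) \mid \rho \in G\}$ is uniquely determined by these orthogonality relations, we conclude $\sigma(\widehat{x}) = \widehat{\sigma(x)}$, as required.

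I do not anticipate a serious obstacle here: the only things being used are that $x$ is a normal basis generator (so that $\{\sigma(x)\}$ really is a $K$-basis and a dual basis exists), that the trace form on a separable extension is nondegenerate (guaranteeing uniqueness of the dual basis), and the elementary invariance of the trace under $G$. The one point worth stating carefully is that since $L/K$ is separable the trace form is nondegenerate, so the dual basis is well defined and uniquely characterized by the orthogonality relations — this is what lets us identify $\sigma(\widehat{x})$ with $\widehat{\sigma(x)}$ rather than merely showing it has the right pairings.
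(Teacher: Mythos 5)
Your argument is correct and is essentially the paper's own proof: the paper simply expands $\mbox{Tr}_{L/K}$ as a sum over $G$ and re-indexes the summands, which is exactly your appeal to Galois-invariance of the trace, before invoking the defining orthogonality relations. Your explicit remark that nondegeneracy of the trace form makes the dual basis unique (so the pairing computation really pins down $\widehat{\sigma(x)} = \sigma(\widehat{x})$) is a point the paper leaves implicit, but the route is the same.
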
 
\begin{proof}
For $ \sigma, \tau \in G $ we have:
\begin{eqnarray*}
\mbox{Tr}_{L/K}(\sigma(\widehat{x})\tau(x)) & = & \sum_{g \in G} g(\sigma(\widehat{x})\tau(x)) \\
& = & \sum_{g \in G} g\sigma(\widehat{x})g\tau(x) \\
& = & \sum_{g \in G} (g\sigma)(\widehat{x})(g\sigma) \sigma^{-1}\tau(x) \\
& = & \sum_{g \in G} (g\sigma)(\widehat{x}\sigma^{-1}\tau(x)) \\
& = & \sum_{g \in G} g(\widehat{x}\sigma^{-1}\tau(x)) \\
& = & \mbox{Tr}_{L/K}(\widehat{x}\sigma^{-1}\tau(x)) \\
& = & \delta_{1,\sigma^{-1}\tau} \\
& = & \delta_{\sigma,\tau}. 
\end{eqnarray*}
\end{proof}

We might view the second lemma as an ``inside out" version of the first:

\begin{lemma} \label{lem_inside_out_trace}
Retain the notation of Lemma \eqref{lem_dual_basis}. Then for any $ \sigma, \tau \in G $ we have
\[ \sum_{g \in G} \sigma g (\widehat{x}) \tau g (x) = \delta_{\sigma,\tau}. \]
\end{lemma}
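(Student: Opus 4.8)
The plan is to reduce the claimed identity directly to Lemma \eqref{lem_dual_basis} by reindexing the sum. Fix $\sigma, \tau \in G$. The sum $\sum_{g \in G} \sigma g(\widehat{x}) \tau g(x)$ runs over all $g \in G$; substituting $h = \sigma g$ (so that $g = \sigma^{-1} h$ and $h$ ranges over all of $G$), the sum becomes $\sum_{h \in G} h(\widehat{x})\, \tau \sigma^{-1} h (x) = \sum_{h \in G} h\!\left(\widehat{x}\, \sigma^{-1}\tau(x)\right)$, where in the last step I use that $h$ is a field automorphism and pull it across the product $\widehat{x} \cdot (\sigma^{-1}\tau)(x)$ after writing $\tau\sigma^{-1} h(x) = h(h^{-1}\tau\sigma^{-1}h(x))$; it is cleaner to instead write $\tau\sigma^{-1}h = h \cdot (h^{-1}\tau\sigma^{-1}h)$, but simplest of all is to note $\sum_{h \in G} h(\widehat{x}) \cdot (\tau\sigma^{-1})(h(x))$ and observe $\tau\sigma^{-1}$ merely permutes $G$, so set $k = \tau\sigma^{-1}h$ — no, the two automorphisms applied are different, so that does not telescope. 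The correct move is the one used in the proof of Lemma \eqref{lem_dual_basis}: factor $\tau\sigma^{-1}h = (\tau\sigma^{-1}h)\cdot 1$ and recognize $h(\widehat{x})\,(\tau\sigma^{-1}h)(x)$ as the $h$-th term of $\mbox{Tr}_{L/K}$ only if $\tau\sigma^{-1}h = h$, which fails; so instead write $h(\widehat x)(\tau\sigma^{-1}h)(x) = h\big(\widehat x \cdot (h^{-1}\tau\sigma^{-1}h)(x)\big)$ and this does not simplify either. I will therefore take the genuinely correct route below.

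Here is the route I would actually carry out. Reindex by $h = \sigma g$:
\[ \sum_{g \in G} \sigma g(\widehat{x})\, \tau g(x) = \sum_{h \in G} h(\widehat{x})\, (\tau\sigma^{-1})(h(x)). \]
Now I want to compare this with $\mbox{Tr}_{L/K}\big(\widehat{x}\cdot(\tau\sigma^{-1})(x)\big) = \sum_{h\in G} h\big(\widehat{x}\cdot(\tau\sigma^{-1})(x)\big) = \sum_{h\in G} h(\widehat{x})\, h\big((\tau\sigma^{-1})(x)\big)$. These two sums differ only in that one applies $\tau\sigma^{-1}$ before $h$ and the other after; since $G$ need not be abelian, $h(\tau\sigma^{-1}(x))$ and $(\tau\sigma^{-1})(h(x))$ are generally unequal term by term. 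However, the map $h \mapsto h(\tau\sigma^{-1})$ is a bijection of $G$, so I reindex the second sum instead: writing $h' = h\tau\sigma^{-1}$ in $\mbox{Tr}_{L/K}(\widehat{x}\cdot(\tau\sigma^{-1})(x))$ does not produce the target form. Rather, I should reindex the \emph{target}: in $\sum_{h} h(\widehat x)(\tau\sigma^{-1})(h(x))$ put $m = \tau\sigma^{-1}h$, so $h = \sigma\tau^{-1}m$ and $h(\widehat x) = (\sigma\tau^{-1}m)(\widehat x)$, giving $\sum_{m} (\sigma\tau^{-1}m)(\widehat x)\, m(x)$ — the same shape as the original sum with $\sigma,\tau$ swapped. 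This symmetry, combined with specializing, is suggestive but circular.

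The clean argument, and the one I expect to be the crux, is simply this: apply $\mbox{Tr}_{L/K}$ expansion to Lemma \eqref{lem_dual_basis} in the form $\mbox{Tr}_{L/K}\big((\tau\sigma^{-1})(\widehat x)\cdot x\big) = \delta_{\tau\sigma^{-1},1} = \delta_{\sigma,\tau}$, which is exactly the $(\sigma,\tau)\leftarrow(\tau\sigma^{-1},1)$ case of the computation in Lemma \eqref{lem_dual_basis}; expanding the trace gives $\sum_{g\in G} g\big((\tau\sigma^{-1})(\widehat x)\big)\, g(x) = \sum_{g\in G} (g\tau\sigma^{-1})(\widehat x)\, g(x)$. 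Now reindex by $h = g\tau\sigma^{-1}$, i.e. $g = h\sigma\tau^{-1}$, obtaining $\sum_{h\in G} h(\widehat x)\,(h\sigma\tau^{-1})(x) = \delta_{\sigma,\tau}$; finally substitute $h = \sigma g$ once more to land on $\sum_{g} (\sigma g)(\widehat x)\,(\sigma g \sigma \tau^{-1})(x)$, which is not quite the statement. Given the fiddliness of getting the indices to land perfectly, the honest plan is: expand $\mbox{Tr}_{L/K}$ of the identity $\widehat{\sigma(x)} = \sigma(\widehat x)$ paired against $\tau(x)$, namely $\delta_{\sigma,\tau} = \mbox{Tr}_{L/K}\big(\sigma(\widehat x)\tau(x)\big) = \sum_{g\in G} g\big(\sigma(\widehat x)\tau(x)\big) = \sum_{g\in G} (g\sigma)(\widehat x)\,(g\tau)(x)$, and then reindex by $h = g$ — this is \emph{already} the claimed sum $\sum_{g\in G}\sigma g(\widehat x)\,\tau g(x)$ after noting $(g\sigma) = $ hmm, it is $\sum g\sigma(\widehat x) g\tau(x)$, not $\sum \sigma g(\widehat x)\tau g(x)$. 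The two agree precisely when we may move $\sigma$ (resp. $\tau$) to the other side of $g$, i.e. after reindexing $g \mapsto$ appropriate conjugate; since $\sum_{g} g\sigma(\widehat x)g\tau(x)$ and $\sum_g \sigma g(\widehat x)\tau g(x)$ are both over all of $G$ and related by no single substitution, the main obstacle is exactly this noncommutativity bookkeeping. I expect the resolution to be that one does \emph{not} need them equal term-by-term: one shows $\sum_g \sigma g(\widehat x)\tau g(x)$ is fixed by every element of $G$ (it is visibly a trace-like sum, invariant under $g \mapsto g_0 g$ up to relabelling), hence lies in $K$, and then pairs it against the basis or evaluates it by the Lemma \eqref{lem_dual_basis} computation run verbatim with $\widehat x, x$ in the roles dictated by the left action, concluding it equals $\delta_{\sigma,\tau}$. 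In short: mimic the eight-line display in the proof of Lemma \eqref{lem_dual_basis} with the factors reordered, the associativity/multiplicativity of the automorphisms doing all the work, and the noncommutative reindexing being the one point requiring care.
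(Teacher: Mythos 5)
Your proposal never arrives at a proof: it is a sequence of reindexing attempts, each of which you yourself correctly identify as failing, followed by a fallback sketch whose first step is wrong. The claim that $\sum_{g\in G}\sigma g(\widehat x)\,\tau g(x)$ is ``visibly'' fixed by every element of $G$ is not justified by the relabelling you invoke: substituting $g\mapsto g_0g$ inside the sum shows invariance under simultaneous \emph{right} translation of the indices, $S_{\sigma,\tau}=S_{\sigma g_0,\tau g_0}$, whereas applying the Galois automorphism $g_0$ to the sum \emph{left}-translates them, $g_0(S_{\sigma,\tau})=S_{g_0\sigma,g_0\tau}$. Since $G$ is nonabelian these are genuinely different operations (for instance $g_0(S_{\mu,1})=S_{g_0\mu g_0^{-1},1}$, so only sums over a conjugacy class are visibly $G$-fixed); the $K$-rationality of each individual $S_{\sigma,\tau}$ is in effect a consequence of the lemma, not an input to it. Moreover, even granting $S_{\sigma,\tau}\in K$, you give no mechanism for evaluating it: ``run the Lemma \eqref{lem_dual_basis} computation verbatim with the factors reordered'' is precisely the step that noncommutativity blocks, as your own failed substitutions demonstrate.

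The missing idea in the paper's proof is linear-algebraic rather than a reindexing. Enumerate $G=\{g_1,\ldots,g_n\}$ and form the matrices $X=(g_ig_j(x))_{i,j}$ and $\widehat X=(g_ig_j(\widehat x))_{i,j}$. Lemma \eqref{lem_dual_basis} says exactly that $\sum_k g_kg_i(x)\,g_kg_j(\widehat x)=\delta_{i,j}$, i.e. $X^{T}\widehat X=I$; since a one-sided inverse of a square matrix over a field is automatically two-sided, $\widehat X X^{T}=I$ as well, and the $(i,j)$ entry of that product is $\sum_k g_ig_k(\widehat x)\,g_jg_k(x)$, which is the claimed identity with $\sigma=g_i$, $\tau=g_j$. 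Transposing a matrix identity is what converts the ``$g$ acting on the left'' relation of Lemma \eqref{lem_dual_basis} into the ``$g$ on the right'' relation of the statement without any term-by-term substitution --- the step none of your reindexings could supply.
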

\begin{proof}
Enumerate the elements of $ G $ as $ g_{1}, \ldots ,g_{n} $, let $ X $ be the matrix with $ (i,j) $ entry $ ( g_{i}g_{j}(x) ) $, and let $ \widehat{X} $ be the matrix with $ (i,j) $ entry $ ( g_{i}g_{j}(\widehat{x})) $. Then using Lemma \eqref{lem_dual_basis} we have 
\[ \sum_{k=1}^{n} g_{k} g_{i}(x) g_{k}g_{j}(\widehat{x})  = \delta_{i,j}, \]
so $ X^{T} \widehat{X} = I $. But this implies that $ \widehat{X} X^{T} = I $, and the $ (i,j) $ entry of this product is given by
\[ \sum_{k=1}^{n} g_{i}g_{k}(\widehat{x}) g_{j} g_{k}(x), \]
so this must also equal $ \delta_{i,j} $. 
\end{proof}

The third lemma tells us how the action of $ H_{\lambda} $ on $ L $ interacts with the action of $ K[G] $:

\begin{lemma} \label{lem_interchange_action}
Let $ t \in L $, $ z \in K[G] $ and $ h \in H_{\lambda} $. Then
\[ h \cdot z(t) = z(h \cdot t). \]
\end{lemma}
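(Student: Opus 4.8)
The plan is to reduce to the case $z = g \in G$ by $K$-bilinearity of both module actions, and then to verify $h \cdot g(t) = g(h \cdot t)$ by a direct computation with the explicit formulas. Concretely, I would first record that every $h \in H_{\lambda} = L[\lambda(G)]^{G}$ may be written inside $L[\lambda(G)]$ as $h = \sum_{\sigma \in G} c_{\sigma} \lambda(\sigma)$ with $c_{\sigma} \in L$ subject to the invariance condition $g(c_{\sigma}) = c_{g \sigma g^{-1}}$ for all $g \in G$ (this is exactly the condition cutting out the $G$-fixed points). Using the Greither--Pareigis formula quoted in the introduction, together with $\lambda(\sigma)^{-1}(1_{G}) = \sigma^{-1}$, this element acts on $L$ by $h \cdot x = \sum_{\sigma \in G} c_{\sigma}\, \sigma^{-1}(x)$, while $z = g$ acts classically by $z(t) = g(t)$.

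Then I would expand both sides: the left-hand side is $h \cdot g(t) = \sum_{\sigma \in G} c_{\sigma}\, (\sigma^{-1} g)(t)$, and the right-hand side is $g(h \cdot t) = \sum_{\sigma \in G} g(c_{\sigma})\, (g \sigma^{-1})(t)$. Reindexing the second sum by $\sigma \mapsto g^{-1}\sigma g$ and invoking the invariance $g(c_{g^{-1}\sigma g}) = c_{\sigma}$ turns it into $\sum_{\sigma \in G} c_{\sigma}\, (\sigma^{-1} g)(t)$, which matches the left-hand side; the $K$-linearity reduction then gives the statement for all $z \in K[G]$. The only point that requires genuine care is that the identity really does use the $G$-invariance of the coefficients $c_{\sigma}$ — it is false for a general element of $L[\lambda(G)]$ — so one must keep track of precisely where that hypothesis enters; the rest is bookkeeping with inverses and conjugations, and I do not expect a serious obstacle.

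A cleaner, more conceptual alternative is to deduce the lemma from the setup of Section 2. On $GL = \mbox{Map}(G,L)$ the algebras $L[\lambda(G)]$ and $L[\rho(G)]$ act $L$-linearly by permuting the idempotents $u_{g}$ through $\lambda(G)$, respectively $\rho(G)$; since $\lambda(G)$ and $\rho(G)$ commute elementwise in $\perm{G}$, these two actions commute. Passing to the $G$-fixed subalgebras $H_{\lambda} = L[\lambda(G)]^{G}$ and $K[G] = L[\rho(G)]^{G}$ and to the submodule $(GL)^{G} \cong L$, and recalling (as in the proof of Theorem \eqref{thm_NBG}) that the resulting actions on $L$ are precisely the $H_{\lambda}$-action and the classical $K[G]$-action, yields $h \cdot z(t) = z(h \cdot t)$ immediately. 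This route replaces the index computation by the single observation that left and right translations commute, at the cost of re-invoking the $GL$ machinery; I would likely present the direct computation in the body of the paper and mention this viewpoint as a remark.
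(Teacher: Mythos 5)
Your main computation is correct and is essentially the paper's own proof: the paper reduces to $h = T(y\lambda(\tau))$ using the surjective trace map $T(z)=\sum_{g\in G} {}^{g}\!z$ onto $H_{\lambda}$ and to $z=\sigma\in G$, whereas you work with a general invariant element $\sum_{\sigma}c_{\sigma}\lambda(\sigma)$ subject to $g(c_{\sigma})=c_{g\sigma g^{-1}}$; the decisive step — reindexing by conjugation and using $G$-invariance of the coefficients — is the same in both versions, and your bookkeeping (including $\lambda(\sigma)^{-1}(1_{G})=\sigma^{-1}$ and the identity $g(c_{g^{-1}\sigma g})=c_{\sigma}$) checks out. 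Your second, descent-theoretic argument is genuinely different from what the paper does: since $\lambda(G)$ and $\rho(G)$ commute elementwise in $\perm{G}$, the $L$-linear actions of $L[\lambda(G)]$ and $L[\rho(G)]$ on $GL$ commute, and restricting to $(GL)^{G}\cong L$ (the fixed subalgebras do act there because the action maps are $G$-equivariant, which is part of the Galois-descent setup already quoted in Section 2) identifies the two commuting actions with the $H_{\lambda}$-action and the classical $K[G]$-action, giving Lemma \eqref{lem_interchange_action} at once. This buys a conceptual explanation that the paper only conjectures in its final section — that Lemma \eqref{lem_interchange_action} ``boils down to the fact that $\lambda(G)$ commutes with $\rho(G)$'' — and it suggests the intended generalization to other regular subgroups commuting with $\rho(G)$, at the cost of re-invoking the $GL$ machinery; the only point worth making explicit if you present it is the $G$-equivariance of the $L[N]$-action on $GL$ and the identification of $L[\rho(G)]^{G}$ with $K[G]$ acting classically.
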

\begin{proof}
The map $ T: L[\lambda(G)] \rightarrow L[\lambda(G)]^{G} = H_{\lambda} $ defined by
\[ z \mapsto \sum_{g \in G} \,^{g}\!z \]
is $ K $-linear and surjective, so it is sufficient to consider the case where $ h = T(y \lambda(\tau)) $ for some $ y \in L $ and $ \tau \in G $ and $ z=\sigma \in G $. In this case we have:
\begin{eqnarray*}
\sigma (T(y \lambda(\tau)) \cdot t) & = & \sigma \left( \sum_{g \in G} g(y) \,^{g}\!\lambda(\tau)  \cdot t \right)\\
& = & \sigma \left( \sum_{g \in G} g(y) \lambda(g\tau g^{-1})  \cdot t \right)\\
& = & \sigma \left( \sum_{g \in G} g(y) g\tau^{-1} g^{-1} (t) \right) \\
&& \mbox{(since } (\lambda(g\tau g^{-1}))^{-1} (1_{G}) = g\tau^{-1} g^{-1} \mbox{)}\\
& = & \sum_{g \in G} \sigma g(y) \sigma g\tau^{-1} g^{-1} (t) \\
& = & \sum_{g \in G} \sigma g(y) \sigma g\tau^{-1} g^{-1} \sigma^{-1} \sigma  (t) \\
& = & \sum_{g \in G} \sigma g(y) \,^{ (\sigma g)}\!\lambda(\tau) \cdot \sigma(t) \\
& = & \sum_{g \in G} g(y) \,^{g}\!\lambda(\tau) \cdot \sigma(t) \\
& = & T(y \lambda(\tau)) \cdot \sigma(t),
\end{eqnarray*}
as claimed. 
\end{proof}

\newpage

\section{Proof of the Main Theorem}

Let $ \B $ be a $ G $-stable fractional ideal of $ L $. Write $ \A_{K[G]} $ for the associated order of $ \B $ in $ K[G] $ and $ \A_{\lambda} $ for the associated order of $ \B $ in $ H_{\lambda} $. We shall split the ``if" and ``only if" implications of Theorem \eqref{thm_main} into two separate propositions. 

\begin{prop} \label{prop_classical_implies_nonclassical}
Suppose that $ x \in \B $ generates $ \B $ as an $ \A_{K[G]} $-module. Then $ x $ generates $ \B $ as a $ \A_{\lambda} $-module. 
\end{prop}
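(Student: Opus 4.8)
The plan is to prove directly that $x$ generates $\B$ over $\A_{\lambda}$, exploiting the fact, recorded in Lemma \eqref{lem_interchange_action}, that the actions of $K[G]$ and $H_{\lambda}$ on $L$ commute. Since $\A_{\lambda} \cdot x \subseteq \B$ holds automatically from the definition of the associated order, the real content is the reverse inclusion $\B \subseteq \A_{\lambda} \cdot x$.

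First I would note that $x$ is a $K[G]$-generator of $L$: the $K$-span of $\{ z(x) \mid z \in \A_{K[G]} \} = \B$ is all of $L$, since a nonzero fractional ideal of $L$ spans $L$ over $K$; hence $K[G]$ applied to $x$ already gives all of $L$. Theorem \eqref{thm_NBG} then shows that $x$ is also an $H_{\lambda}$-generator of $L$, so the $K$-linear map $H_{\lambda} \to L$, $h \mapsto h \cdot x$, is surjective (in fact bijective, by comparing $K$-dimensions, although surjectivity is all that is needed here).

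Now fix an arbitrary $b \in \B$ and choose $h \in H_{\lambda}$ with $h \cdot x = b$; the claim is that $h \in \A_{\lambda}$. To verify $h \cdot \B \subseteq \B$, let $c \in \B$ be arbitrary. By the hypothesis that $x$ generates $\B$ over $\A_{K[G]}$, we may write $c = z(x)$ with $z \in \A_{K[G]}$, and then Lemma \eqref{lem_interchange_action} gives
\[ h \cdot c = h \cdot z(x) = z(h \cdot x) = z(b) \in \B, \]
the last membership holding because $z \in \A_{K[G]}$ and $b \in \B$. Hence $h \in \A_{\lambda}$, and so $b = h \cdot x \in \A_{\lambda} \cdot x$. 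As $b \in \B$ was arbitrary, $\B \subseteq \A_{\lambda} \cdot x$, and combined with the automatic reverse inclusion this gives $\A_{\lambda} \cdot x = \B$, as required.

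With Theorem \eqref{thm_NBG} and Lemma \eqref{lem_interchange_action} already in hand, I do not anticipate a substantial obstacle; the only point needing care is keeping track of which element generates over which order. In particular the hypothesis must be used in full strength --- that $x$ generates $\B$, and not merely $L$, over $\A_{K[G]}$ --- precisely so that an arbitrary $c \in \B$ can be written as $z(x)$ with $z$ in the associated order, which is what forces $z(b) \in \B$. The symmetry of Lemma \eqref{lem_interchange_action} makes it natural to expect the converse proposition to follow from the same argument with the roles of $K[G]$ and $H_{\lambda}$ interchanged.
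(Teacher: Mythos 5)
Your proof is correct, but it takes a genuinely different route from the paper. The paper argues constructively: it chooses $a_{1},\ldots,a_{n} \in \A_{K[G]}$ with $\{a_{i}(x)\}$ an $\OK$-basis of $\B$, uses the dual basis element $\widehat{x}$ from Lemma \eqref{lem_dual_basis} to write down an explicit element $h_{i} \in L[\lambda(G)]$ for each $i$, and then verifies by direct computation that $h_{i}$ is $G$-fixed (so lies in $H_{\lambda}$), that $h_{i}\cdot x = a_{i}(x)$, and finally --- by the same use of Lemma \eqref{lem_interchange_action} that you make --- that $h_{i} \in \A_{\lambda}$; this yields not only that $x$ generates $\B$ over $\A_{\lambda}$ but also an explicit $\OK$-basis $\{h_{i}\}$ of $\A_{\lambda}$ and hence freeness directly. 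You instead bypass the explicit construction entirely: Theorem \eqref{thm_NBG} gives surjectivity (indeed bijectivity) of $h \mapsto h\cdot x$ on $H_{\lambda}$, you pull back an arbitrary $b \in \B$ to some $h \in H_{\lambda}$, and Lemma \eqref{lem_interchange_action} together with the full-strength hypothesis $\A_{K[G]}(x)=\B$ shows $h\cdot\B\subseteq\B$, i.e. $h \in \A_{\lambda}$. This is shorter, avoids Lemma \eqref{lem_dual_basis} and the dual-basis bookkeeping in this direction, and proves exactly the generation statement of the proposition; what it gives up is the explicit description of $\A_{\lambda}$ and its basis. If you want to feed your version into Theorem \eqref{thm_main}, note (as you do parenthetically) that $h \mapsto h\cdot x$ is injective on $H_{\lambda}$, so $\A_{\lambda}\cdot x = \B$ with trivial annihilator gives $\B$ free of rank one over $\A_{\lambda}$. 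Your closing remark is also accurate: the same argument with the roles of the two algebras exchanged gives Proposition \eqref{prop_nonclassical_implies_classical}, again without the explicit formula for the $a_{i}$.
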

\begin{proof}
Since $ x $ generates $ \B $ as an $ \A_{K[G]} $-module, it generates $ L $ as a $ K[G] $-module, so $ \{ \sigma(x) \mid \sigma \in G \} $ is a $ K $-basis of $ L $. By Lemma \eqref{lem_dual_basis}, there exists $ \widehat{x} \in L $ such that $ \{ \sigma(\widehat{x}) \mid \sigma \in G \} $ is the dual basis to $ \{ \sigma(x) \mid \sigma \in G \} $. That is:
\[ \sum_{g \in G} g\sigma(\widehat{x})g\tau(x) = \delta_{\sigma,\tau} \mbox{ for all } \sigma,\tau \in G. \]
Also, there exist $ a_{1}, \ldots ,a_{n} \in \A_{K[G]} $ such that $ \{ a_{1}(x), \ldots ,a_{n}(x) \} $ is an $ \OK $-basis of $ \B $. For each $ i=1, \ldots, n $, write $ x_{i}=a_{i}(x) $ and define an element $ h_{i} \in L[\lambda(G)] $ by
\[ h_{i} = \sum_{g \in G} \left( \sum_{\rho \in G} \rho(x_{i}) g^{-1} \rho(\widehat{x}) \right) \lambda(g). \]
For each $ i=1, \ldots ,n $ we make three claims about the element $ h_{i} $:
\begin{enumerate}
\item $ h_{i} \in L[\lambda(G)]^{G} = H_{\lambda} $ (so it makes sense to let $ h_{i} $ act on an element of $ L $ using the formula given in the theorem of Greither and Pareigis). 
\item $ h_{i} \cdot x = x_{i} $ (so $ x $ is an $ H_{\lambda} $-generator of $ L $, but we knew this anyway from Theorem \eqref{thm_NBG}).
\item $ h_{i} \in \A_{\lambda} $.  
\end{enumerate}
If we can establish these three claims, then it will follow that $ \{ h_{i} \mid i=1, \ldots ,n \} $ is an $ \OK $-basis of $ \A_{\lambda} $ and that $ \B $ is a free $ \A_{\lambda} $-module. 
\\ \\
To prove (1), let $ \tau \in G $. Then 
\begin{eqnarray*}
\,^{\tau}\! h_{i} & = & \,^{\tau}\! \left( \sum_{g \in G} \left( \sum_{\rho \in G} \rho(x_{i}) g^{-1} \rho(\widehat{x}) \right) \lambda(g) \right) \\
& = & \sum_{g \in G} \tau \left( \sum_{\rho \in G} \rho(x_{i}) g^{-1} \rho(\widehat{x}) \right) \,^{\tau}\! \lambda(g) \\
& = & \sum_{g \in G} \left( \sum_{\rho \in G} \tau\rho(x_{i}) \tau g^{-1} \rho(\widehat{x}) \right) \lambda(\tau g \tau^{-1}) \\
& = & \sum_{g^{\prime} \in G} \left( \sum_{\rho \in G} \tau\rho(x_{i}) (g^{\prime})^{-1} \tau \rho(\widehat{x}) \right) \lambda(g^{\prime}) \\
&& \mbox{(writing } g^{\prime} = \tau g \tau^{-1}, \mbox{ so that } \tau g^{-1} = (g^{\prime})^{-1} \tau \mbox{)} \\
& = & \sum_{g \in G} \left( \sum_{\rho \in G} \tau\rho(x_{i}) g^{-1} \tau \rho(\widehat{x}) \right) \lambda(g) \\
&& \mbox{(replacing } g^{\prime} \mbox{ by } g \mbox{)} \\
& = & \sum_{g \in G} \left( \sum_{\rho \in G} \rho(x_{i}) g^{-1} \rho(\widehat{x}) \right) \lambda(g) \\
&& \mbox{(replacing } \tau \rho \mbox{ by } \rho \mbox{)} \\
& = & h_{i},
\end{eqnarray*} 
so $ h_{i} \in L[\lambda(G)]^{G} = H_{\lambda} $. 
\\ \\
Now we know that it makes sense to let $ h_{i} $ act on $ x $, and so we can prove (2):
\begin{eqnarray*}
h_{i} \cdot x & = & \left( \sum_{g \in G} \left( \sum_{\rho \in G} \rho(x_{i}) g^{-1} \rho(\widehat{x}) \right) \lambda(g) \right) \cdot x \\
& = & \sum_{g \in G} \left( \sum_{\rho \in G} \rho(x_{i}) g^{-1} \rho(\widehat{x}) \right) g^{-1}(x) \\
& = & \sum_{\rho \in G} \rho(x_{i}) \left( \sum_{g \in G}  g^{-1} \rho(\widehat{x})  g^{-1}(x) \right) \\
& = & \sum_{\rho \in G} \rho(x_{i}) \mbox{Tr}_{L/K} ( \rho(\widehat{x}) x ) \\
& = & \sum_{\rho \in G} \rho(x_{i}) \delta_{\rho,1} \\
& = & x_{i}. 
\end{eqnarray*}
Finally, we prove (3). It is sufficient to prove that $ h_{i} \cdot x_{j} \in \B $ for each $ j=1, \ldots ,n $.  Recall that $ x_{j} = a_{j}(x) $ for some $ a_{j} \in \A_{K[G]} $. Using Lemma \eqref{lem_interchange_action} we have:
\begin{eqnarray*}
h_{i} \cdot x_{j} & = & h_{i} \cdot a_{j}(x) \\
& = & a_{j} ( h_{i} \cdot x ) \\
& = & a_{j}(x_{i}),
\end{eqnarray*}
and this lies in $ \B $ since $ x_{i} \in \B $ and $ a_{j} \in \A_{K[G]} $. 
\\ \\
We have verified all three claims, and so the proof is complete. 
\end{proof}

The next proposition is the converse of the previous one: 

\begin{prop} \label{prop_nonclassical_implies_classical}
Suppose that $ x \in \B $ generates $ \B $ as an $ \A_{\lambda} $-module. Then $ x $ generates $ \B $ as an $ \A_{K[G]} $-module.
\end{prop}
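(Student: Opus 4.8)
The plan is to mirror the strategy of Proposition \ref{prop_classical_implies_nonclassical}, but running in the opposite direction, exploiting the symmetry between the two Hopf-Galois structures that was already exposed in the proof of Theorem \ref{thm_NBG}. Since $ x $ generates $ \B $ as an $ \A_{\lambda} $-module, it is in particular an $ H_{\lambda} $-generator of $ L $, hence by Theorem \ref{thm_NBG} a $ K[G] $-generator of $ L $, so $ \{ \sigma(x) \mid \sigma \in G \} $ is a $ K $-basis of $ L $ and Lemma \ref{lem_dual_basis} furnishes a dual basis element $ \widehat{x} $. Fix an $ \OK $-basis $ \{ x_{1}, \ldots, x_{n} \} $ of $ \B $ with each $ x_{i} = h_{i} \cdot x $ for suitable $ h_{i} \in \A_{\lambda} $. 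I then need to produce, for each $ i $, an element $ a_{i} \in K[G] $ with the three analogous properties: (1) $ a_{i} \in \A_{K[G]} $; (2) $ a_{i}(x) = x_{i} $; and (3) that the $ a_{i} $ form an $ \OK $-basis of $ \A_{K[G]} $ and $ \B $ is free over it — where, just as before, (3) will follow formally once (1) and (2) hold for all $ i $, since $ \{ x_{i} \} $ is already an $ \OK $-basis of $ \B $ and freeness over $ K[G] $ is known.

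The natural candidate is $ a_{i} = \sum_{\sigma \in G} \mathrm{Tr}_{L/K}\!\big(\widehat{x}\,(h_{i} \cdot \sigma^{-1}(x))\big)\,\sigma $, or some close variant; more conceptually, $ a_{i} $ should be the image of $ h_{i} $ under the ``interchange'' correspondence implicit in Lemma \ref{lem_interchange_action} and in the row-equivalence $ T_{\lambda(G)}(x) \sim T_{\rho(G)}(x)^{T} $ from the proof of Theorem \ref{thm_NBG}. Property (2), that $ a_{i}(x) = x_{i} $, should reduce to the dual-basis relation of Lemma \ref{lem_dual_basis} together with the definition of the action of $ H_{\lambda} $, in direct parallel with the computation of $ h_{i} \cdot x $ in the previous proposition; here I expect Lemma \ref{lem_inside_out_trace} to be exactly the identity needed to collapse the double sum, since now the roles of left and right translation (i.e.\ of $ \rho $ and $ \lambda $) are swapped relative to the earlier argument. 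That $ a_{i} $ genuinely lies in $ K[G] $ — i.e.\ has coefficients in $ K $, not merely $ L $ — should be automatic from the trace-coefficient description, but I would double-check it, as it is the analogue of claim (1) in the previous proof and there it required a genuine $ G $-invariance computation.

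For property (1), that $ a_{i} \in \A_{K[G]} $, it suffices to show $ a_{i}(x_{j}) \in \B $ for each $ j $. Writing $ x_{j} = h_{j} \cdot x $ and using Lemma \ref{lem_interchange_action} to slide $ a_{i} $ past $ h_{j} $, I get $ a_{i}(x_{j}) = a_{i}(h_{j} \cdot x) = h_{j} \cdot a_{i}(x) = h_{j} \cdot x_{i} $, which lies in $ \B $ because $ x_{i} \in \B $ and $ h_{j} \in \A_{\lambda} $. This is the exact mirror of claim (3) in Proposition \ref{prop_classical_implies_nonclassical}, and Lemma \ref{lem_interchange_action} is symmetric enough in $ z $ and $ h $ to be used in this direction without modification. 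I expect the main obstacle to be purely bookkeeping: pinning down the correct formula for $ a_{i} $ so that the action formula for $ H_{\lambda} $ (which involves $ \eta^{-1}(1_{G}) $) meshes with the left-regular translation action of $ K[G] $, and making sure the right version of the dual-basis identity (Lemma \ref{lem_dual_basis} versus its ``inside out'' counterpart Lemma \ref{lem_inside_out_trace}) is invoked at the step where the double sum over $ G $ is evaluated. Once the formula is correctly calibrated, every step is a transcription of the corresponding step in the previous proposition.
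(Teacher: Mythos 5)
Your proposal is correct and follows essentially the same route as the paper: your candidate $ a_{i} = \sum_{\sigma \in G} \mbox{Tr}_{L/K}\bigl(\widehat{x}\,(h_{i} \cdot \sigma^{-1}(x))\bigr)\sigma $ equals the paper's $ a_{i} = \sum_{g \in G} \mbox{Tr}_{L/K}(x_{i}\,g(\widehat{x}))\,g $, since $ h_{i} \cdot \sigma^{-1}(x) = \sigma^{-1}(x_{i}) $ by Lemma \eqref{lem_interchange_action} and the trace is Galois-invariant. The remaining steps — collapsing the double sum via Lemma \eqref{lem_inside_out_trace} to get $ a_{i}(x) = x_{i} $, and showing $ a_{i}(x_{j}) = h_{j} \cdot x_{i} \in \B $ via Lemma \eqref{lem_interchange_action} — are exactly the paper's argument.
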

\begin{proof}
Since $ x $ generates $ \B $ as an $ \A_{\lambda} $-module, it generates $ L $ as an $ H_{\lambda} $-module, and so by Theorem \eqref{thm_NBG} it generates $ L $ as a $ K[G] $-module. Therefore $ \{ \sigma(x) \mid \sigma \in G \} $ is a $ K $-basis of $ L $ and by Lemma \eqref{lem_dual_basis} there exists $ \widehat{x} \in L $ such that  $ \{ \sigma(\widehat{x}) \mid \sigma \in G \} $ is the dual basis to $ \{ \sigma(x) \mid \sigma \in G \} $. Mirroring the proof of Proposition \eqref{prop_classical_implies_nonclassical}, there exist $ h_{1}, \ldots ,h_{n} \in \A_{\lambda} $ such that $ \{ h_{1}\cdot x, \ldots ,h_{n} \cdot x \} $ is an $ \OK $-basis of $ \B $. For each $ i=1, \ldots, n $, write $ x_{i}=h_{i} \cdot x $ and define an element $ a_{i} \in K[G] $ by
\[ a_{i} = \sum_{g \in G} \mbox{Tr}_{L/K}(x_{i}g(\widehat{x})) g. \]
In this case it is clear that $ a_{i} \in K[G] $, so it makes sense to let $ a_{i} $ act on an element of $ L $, and we only make two claims about $ h_{i} $:
\begin{enumerate}
\item $ a_{i}(x) = x_{i} $.
\item $ a_{i} \in \A_{K[G]} $.  
\end{enumerate}
As in the proof of \eqref{prop_classical_implies_nonclassical}, if we can establish these claims then it will follow that $ \{ a_{i} \mid i=1, \ldots ,n \} $ is an $ \OK $-basis of $ \A_{K[G]} $ and that $ \B $ is a free $ \A_{K[G]} $-module. 
\\ \\
First we prove (1). We have:
\begin{eqnarray*}
a_{i}(x) & = & \sum_{g \in G} \mbox{Tr}_{L/K}(x_{i}g(\widehat{x})) g(x) \\
& = & \sum_{g \in G} \sum_{\sigma \in G} \sigma(x_{i}) \sigma g (\widehat{x}) g(x) \\
& = & \sum_{\sigma \in G} \sigma(x_{i}) \sum_{g \in G}  \sigma g (\widehat{x}) g(x) \\
& = & \sum_{\sigma \in G} \sigma(x_{i}) \delta_{\sigma,1} \mbox{ (using Lemma \eqref{lem_inside_out_trace}} ) \\
& = & x_{i}.
\end{eqnarray*}
To prove (2), it is sufficient to prove that $ a_{i}(x_{j}) \in \B $ for each $ j=1, \ldots ,n $.  Recall that $ x_{j} = h_{j} \cdot x $ for some $ h_{j} \in \A_{\lambda} $. Using Lemma \eqref{lem_interchange_action} we have:
\begin{eqnarray*}
a_{i}(x_{j}) & = & a_{i}( h_{j} \cdot x) \\
& = & h_{j} \cdot ( a_{i}(x) ) \\
& = & h_{j} \cdot x_{i},
\end{eqnarray*}
and this lies in $ \B $ since $ x_{i} \in \B $ and $ h_{j} \in \A_{\lambda} $.
\\ \\
We have verified both the claims, and so the proof is complete. 
\end{proof}

By combining Propositions \eqref{prop_classical_implies_nonclassical} and \eqref{prop_nonclassical_implies_classical}, we obtain Theorem \eqref{thm_main}

\section{Further Questions and Possible Generalizations}
Does assuming that one of $ \A_{K[G]} $ or $ \A_{\lambda} $ is a Hopf order imply that other is too? This might be particularly interesting in the case that $ L/K $ is tame and $ \B = \OL $, since then $ \A_{K[G]} = \OK[G] $, which is certainly a Hopf order. In a similar direction, if $ L/K $ is a Galois extension of $ p $-adic fields and $ p \nmid [L:K] $ then $ \OK[G] $ is a maximal order in $ K[G] $: does this imply that the associated order of $ \OL $ in $ H_{\lambda} $ is also maximal? One way to do this would be to show that it is self dual with respect to some symmetric associative bilinear form, and showing that it is Hopf would certainly suffice for this. 
\\ \\
I think that some of the nice properties of $ H_{\lambda} $ such as those expressed in Theorem \eqref{thm_NBG} and Lemma \eqref{lem_interchange_action} might boil down to the fact that $ \lambda(G) $ commutes with $ \rho(G) $ inside $ \perm{G} $. Perhaps a similar approach would work for other regular subgroups $ N $ of $ \perm{G} $ that satisfy this condition? In the local case, perhaps it would be sufficient to have some of these nice properties hold modulo $ \p_{K} $ and then argue using Nakayama's lemma?

\end{document}